\title{Towards Computing Vector Partition Functions by Iterated Partial Fraction Decomposition}
\author{Thomas Bliem\thanks{Supported by the Deutsche Forschungsgemeinschaft.}}
\date{}
\newcommand{\arxiv}[1]{\href{http://arxiv.org/abs/#1}{\nolinkurl{arXiv:#1}}}
\newcommand{\urn}[1]{\href{http://nbn-resolving.de/urn:#1}{\nolinkurl{urn:#1}}}
\newcommand{\doi}[1]{\href{http://dx.doi.org/#1}{\nolinkurl{doi:#1}}}
\newcommand{\zbl}[1]{\href{http://www.zentralblatt-math.org/zmath/en/advanced/?q=an:#1&format=complete}{Zbl \nolinkurl{#1}}}
\newtheorem{proposition}{Proposition}
\newtheorem{lemma}{Lemma}
\theoremstyle{definition}
\newtheorem{definition}{Definition}
\newtheorem{example}{Example}
\newcommand{\nonnegint}{\mathbf{N}_0} 
\newcommand{\posint}{\mathbf{N}}
\newcommand{\nonnegreal}{\mathbf{R}_+} 
\newcommand{\units}{\times} 
\newcommand{\infrt}[1]{\!\!\sqrt[\infty]{#1}}
\DeclareMathOperator{\const}{const}
\renewcommand{\phi}{\varphi}
\begin{document}

\maketitle

\begin{abstract}
\noindent We investigate the possibilities to calculate vector partition functions by means of iterated partial fraction decomposition, as suggested by Beck (2004).
Particularly, for an important type of families of rational functions, we describe an algorithm to compute the numerators in their partial fraction decomposition ``formally,'' i.e., as a formal expression in the parameter.
We also analyze the type of generalized rational functions that appear during the execution of an algorithm based on iterated partial fraction decomposition and explain how to handle these objects.
\end{abstract}

\section{Introduction}
\label{nzBY63K7}

We denote the set of nonnegative integers by $\nonnegint$ and the set of nonnegative real numbers by $\nonnegreal$.
Let $c_1, \ldots, c_d \in \mathbf{Z}^m \subset \mathbf{R}^m$, contained in an open half-space.
The typical situation to have in mind is that $d > m$, so that the family $(c_i)$ is linearly dependent.
To this data, associate the \emph{vector partition function} $\phi_{c_1, \ldots, c_d} : \mathbf{Z}^m \to \nonnegint$ by
\[
  \phi_{c_1, \ldots, c_d}(b) = \lvert \{ x \in \nonnegint^d : x_1 c_1 + \cdots + x_d c_d = b \} \rvert .
\]
So $\phi_{c_1, \ldots, c_d}(v)$ is the number of decompositions of $b$ as a nonnegative integral linear combination of $c_1, \ldots, c_d$.
In a more geometric language, $\phi_{c_1, \ldots, c_d}(b)$ is the number of integral points in the parametric polytope
\[
	P_{c_1, \ldots, c_d}(b) = \{ x \in \nonnegreal^d : x_1 c_1 + \cdots + x_d c_d = b \} .
\]
Thus the theory of vector partition functions generalizes Ehrhart theory \cite{ehrhart1962}, the latter corresponding to the study of the restriction of $\phi_{c_1, \ldots, c_d}$ to a ray.

Vector partition functions occur throughout mathematics as a convenient way to encode polyhedral models, such as:
\begin{inparaenum}
\item In representation theory, as Kostant's partition function \cite{kostant1959}.
\item Also in representation theory, as the vector partition functions constructed by Billey et al.\ \cite{billey2004} from Gelfand-Tsetlin patterns and by myself \cite{bliem2010} from Littelmann patterns and Berenstein-Zelevinsky polytopes.
\item For counting integer flows in networks, as discussed by Baldoni et al.\ \cite{baldoni2004}.
\item In the optimization of computer programs (see \cite{verdoolaege2007} for an extensive list of applications in this area).
\end{inparaenum}

The usefulness of expressing a given function by a formula involving vector partition functions partially relies on the following theorem, proved by Blackley \cite{blakley1964}, Dahmen and Micchelli \cite{dahmen1988}, and Sturmfels \cite{sturmfels1995}:
Given $c_1, \ldots, c_d$, there is a fan $F$ in $\mathbf{R}^d$ such that the restriction of $\phi_{c_1, \ldots, c_d}$ to each maximal cone $C \in F$ is given by a quasi-polynomial $f_C$, and $\phi_{c_1, \ldots, c_d}$ vanishes outside $F$.
Suppose you are given a formula for some interesting quantity involving a vector partition function.
If you succeed in determining explicitly its fan $F$ and its quasi-polynomials $(f_C)$, you obtain a very explicit general formula for the original quantity.
For this approach to work, it is of course essential to have effective means to determine $F$ and $(f_C)$.
To date, there are a number of algorithms known allowing to do this, employing a variety of ideas.
Some of the approaches I am aware of are the following:
\begin{enumerate}
\item Using Barvinok's algorithm \cite{barvinok1994}.
This has been worked out by Verdoolaege et al.\ \cite{verdoolaege2007}.
\item Using the Szenes-Vergne residue formula \cite{szenes2003}. This was employed by Baldoni et al.\ \cite{baldoni2006} for Kostant's partition function associated with classical root systems and by myself \cite{bliem2010} for vector partitions functions obtained from generalized Gelfand-Tsetlin patterns.
\item Using Paradan's wall crossing formula which relates the quasi-polynomials of adjacent maximal cones of $F$, see Boysal and Vergne \cite{boysal2008}.
\item Iteratively rewriting the generating function as a sum of rational functions with a special pole structure.
Milev is working on this approach \cite{milev2009}.
Note that he uses the term ``partial fraction decomposition'' in a different sense than we do.
\end{enumerate}
Beck suggested a further algorithm \cite{beck2004}, based on iterated partial fraction decomposition of the generating function.
The aim of this extended abstract is to report on work in progress to work this idea out.

\section{Preliminaries}
\label{CpwJ444b}

This section is a summary of Beck's suggestions \cite{beck2004} to compute vector partition functions.
We denote the set of positive integers by $\mathbf{N}$.
Let $m, d \in \posint$ and let $A \in \mathbf{Z}^{(m,n)}$ be an $(m \times d)$-matrix with integral coefficients.
We consider $A$ as a linear map $\mathbf{R}^d \to \mathbf{R}^m$.
Suppose that $\mathrm{ker}(A) \cap \nonnegreal^d = \{ 0 \}$.
Then we can associate with $A$ the \emph{vector partition function} $\phi_A : \mathbf{Z}^m \to \nonnegint$ by
\[
	\phi_A(b) = \lvert \{ x \in \nonnegint^d : Ax = b \} \rvert .
\]
Let $c_1, \ldots, c_d$ be the column vectors of $A$.
Then $\phi_A$ as defined above coincides with $\phi_{c_1, \ldots, c_d}$ as defined in \autoref{nzBY63K7}.
We use the standard multiexponent notation, $z^b = z_1^{b_1} \cdots z_m^{b_m}$.
The generating function of $\phi_A$,
\[
	f_A(z) = \sum_{b \in \mathbf{Z}^m} \phi_A(b)z^b ,
\]
converges on $\{ z \in \mathbf{C}^m : \lvert z^{c_k} \rvert < 1$ for $k = 1, \ldots, d \}$ to the rational function
\begin{equation}
\label{Jc8LhJrg}
	f_A(z) = \prod_{k=1}^d \frac{1}{1-z^{c_k}} .
\end{equation}
By definition of the generating function, $\phi_A(b)$ is the $z^b$-coefficient of $f_A(z)$.
Equivalently, it is the constant coefficient of $f_A(z)z^{-b}$,
\[
	\phi_A(b) = \operatorname{const} f_A(z)z^{-b} .
\]
Consider $f_A(z)z^{-b}$ as a rational function in one variable $z_m$ over the field $K_{m-1} = \mathbf{C}(z_1, \ldots, z_{m-1})$.
Assume there is a well-defined notion of the constant coefficient of a rational function.
Denote the constant coefficient of $f_A(z)z^{-b} \in K_{m-1}(z_m)$ by $\operatorname{const}_{z_n} f_A(z)z^{-b} \in K_{m-1}$.
Then by iterating this procedure we get the expression
\[
	\phi_A(b) = \operatorname{const}_{z_1} \cdots \; \operatorname{const}_{z_m} f_A(z)z^{-b} \in K_0 = \mathbf{C}
\]
for the vector partition function $\phi_A$.
Hence, the problem of computing $\phi_A$ is reduced to the problem of finding the constant coefficient of a given rational function in one variable.
This can be done using partial fraction decomposition, as described in the following.

Let $K$ be a field.
Consider the field $K(w)$ of rational functions in one variable $w$ over $K$.
(In the setting of the previous paragraph, $K = \mathbf{C}(z_1, \ldots, z_{m-1})$ and $w = z_m$.)
Let $f(w) = \frac{r(w)}{s(w)} \in K$ for polynomials $r(w), s(w) \in K[w]$.
Suppose $\mathrm{deg}(r(w)) < \mathrm{deg}(s(w))$.
We want to compute $\operatorname{const}_w f(w)w^{-b}$ as a function of $b \in \mathbf{Z}$.
We can suppose $s(0) \neq 0$ by a shift in $b$.
Fix a decomposition $s(w) = s_1(w) \cdots s_d(w)$ into pairwise coprime factors.
Then $s_k(w)$ and $w$ are also coprime for $k = 1, \ldots, d$.
Hence, for all $b \geq 0$, we get a partial fraction decomposition
\begin{equation} \label{Nb7ZAmCp}
	f(w)w^{-b}
	= \frac{A_1(b; w)}{s_1(w)}
	+ \cdots
	+ \frac{A_d(b; w)}{s_d(w)}
	+ \frac{B(b; w)}{w^b}
\end{equation}
for some $A_k(b; w), B(b; w) \in K[w]$ such that $\mathrm{deg}(A_k(b; w)) < \mathrm{deg}(s_k(w))$ and $\mathrm{deg}(B(b; w)) < b$.
Then
\[ \begin{split}
	\operatorname{const}_w f(w)w^{-b}
	&= \operatorname{const}_w \frac{A_1(b; w)}{s_1(w)}
	+ \cdots
	+ \operatorname{const}_w \frac{A_d(b; w)}{s_d(w)}
	+ \operatorname{const}_w \frac{B(b; w)}{w^b} \\
	&= \frac{A_1(b; 0)}{s_1(0)}
	+ \cdots
	+ \frac{A_d(b; 0)}{s_d(0)} .
\end{split} \]
Note that, in particular, the constant coefficient of $f(w)w^{-b}$ does not depend on $B(b; w)$ at all, and neither on the higher coefficients of $A_k(b;w)$.
Hence we do not need to know the complete partial fraction decomposition of $f(w)w^{-b}$ to proceed with our computations.

By the above ideas it is indeed possible to compute quasi-polynomial expressions for vector partition functions.
Beck demonstrates this by explicitly calculating the case $A = \bigl( \begin{smallmatrix} 1 & 2 & 1 & 0 \\ 1 & 1 & 0 & 1 \end{smallmatrix} \bigr)$.
However, this example is special in some respect:
\begin{inparaenum}
\item The last row of $A$ only contains the numbers $0$ and $1$.
Hence, all roots of the polynomials $1-z^{c_k} \in \mathbf{C}(z_1)[z_2]$ will be elements of $\mathbf{C}(z_1)$.
In general, they would be elements of an algebraic extension.
\item All components of $A$ are nonnegative.
Given a matrix $A$ such that $\phi_A$ is defined, one can always suppose that all components of $A$ are nonnegative by a unimodular change of coordinates, so this assumption is without loss of generality.
Still it is possible to define $\phi_A$ for matrices $A$ with negative components, so it would be nice to understand how to deal with those directly.
\item The columns of $A$ are pairwise linearly independent.
This translates to the fact that the generating function has no multiple poles when considered as a function in $z_2$.
Additional complications will arise if there are multiple poles.
\end{inparaenum}

There is further study needed to make the proposed procedure applicable mechanically.
In particular, the following questions must  be addressed:
\begin{enumerate}
\item Which decomposition of $s(w)$ should one use?
There is a natural one coming from \eqref{Jc8LhJrg}, and the usual fine decomposition into prime powers.
\item What is the constant term of a rational function?
In general this depends on the region of convergence considered.
If handled without the appropriate care, this leads to paradoxical formulas such as
\[ \begin{split}
	1
	&= \operatorname{const}\bigl( 1 + z + z^2 + \cdots \bigr)
	= \operatorname{const} \frac{1}{1-z} \\
	&= \operatorname{const} -z^{-1} \frac{1}{1-z^{-1}}
	= \operatorname{const}\bigl( -z^{-1} - z^{-2} - \cdots \bigr)
	= 0 .
\end{split} \]
\item Which algorithm should be used to compute the partial fraction decomposition?
Note that it is crucial to be able not only to compute a particular partial fraction decomposition of $f(w)w^{-b}$ for a a particular $b \in \nonnegint$, but as a function of the variable $b$.
\item How do we deal with multiple poles of $f_A(w)$?
These occur if and only if $A$ has a pair of linearly dependent columns.
Hence they become increasingly unlikely for higher $m$, but cannot be excluded.
\item If we answer the first question by using the fine decomposition of $s(w)$, lots of summands and roots of unity will appear.
Do we have to compute all of them individually, or can we use the Galois invariance, i.e., the invariance of $\phi_A(b)$ under the $G(\mathbf{Q}^\mathrm{ab} | \mathbf{Q})$-operation, to facilitate this?
\end{enumerate}
We will partially address these questions in the following.
The remaining issues are delayed to a forthcoming full version of this extended abstract, where the word ``Towards'' will be hopefully dropped from the title.

\section{A lemma about partial fraction decomposition}
\label{EE8BVYaT}

By partial fraction decomposition I mean the following:

\begin{definition}
\label{wL3ry9dn}
Let $\mathcal{O}$ be a domain.
Let $\frac rs$ be an element of the quotient field of $\mathcal{O}$.
Fix a decomposition $s = s_1 \cdots s_d$ of $s$ such that $(s_j, s_k) = \mathcal{O}$ for all $j \neq k$.
Then a \emph{partial fraction decomposition} of $\frac rs$ with respect to the decomposition $s_1, \ldots, s_d$ is an expression
\begin{equation}
	\label{9tzyHkce}
	\frac rs = \frac{r_1}{s_1} + \cdots + \frac{r_d}{s_d} + p .
\end{equation}
with $r_1, \ldots, r_d, p \in \mathcal{O}$.
\end{definition}

Note that in this generality the partial fraction decomposition is not unique.
Of course, in the important case when $\mathcal{O}$ is Euclidean, one can standardize further to obtain uniqueness.
Partial fraction decompositions always exist.
This follows from the following lemma, which gives more importantly a way to determine the numerators $r_k$ explicitly.
Note that, once the numerators are determined, the integral part $p$ can be obtained by subtracting the fractional parts from $\frac rs$.

\begin{lemma}
\label{GrJNo6wT}
Let $r, s, s_1, \ldots, s_d \in \mathcal{O}$ as in \autoref{wL3ry9dn}.
Let $c_k = \frac{s}{s_k}$ for $k \in \{ 1, \ldots, n \}$.
Then $c_k$ is invertible in $\mathcal{O}/(s_k)$.
Let $d_k \in \mathcal{O}$ be an inverse of $c_k$ modulo $(s_k)$ and $r_k = rd_k$.
Then the $r_k$ give rise to a partial fraction decomposition of $\frac rs$, i.e., \eqref{9tzyHkce} holds for some $p \in \mathcal{O}$.
\end{lemma}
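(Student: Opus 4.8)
The plan is to prove the two assertions in turn, the second reducing to a single divisibility statement that the pairwise comaximality hypothesis is tailor-made to supply. For the invertibility of $c_k = \prod_{j \neq k} s_j$ modulo $(s_k)$, I would use that comaximality is stable under products: if $(a, s_k) = \mathcal{O}$ and $(b, s_k) = \mathcal{O}$, then writing $1 = ua + v s_k$ and $1 = u'b + v's_k$ and multiplying shows $(ab, s_k) = \mathcal{O}$. Applying this to the factors $s_j$ with $j \neq k$, each comaximal with $s_k$ by hypothesis, yields $(c_k, s_k) = \mathcal{O}$. In particular $c_k$ is a unit in $\mathcal{O}/(s_k)$, and any $d_k$ with $c_k d_k + s_k e_k = 1$ is the required inverse, so that $c_k d_k \equiv 1 \pmod{s_k}$.

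Next I would put the proposed fractional parts over the common denominator $s$. Since $s = s_k c_k$, we have $\frac{r_k}{s_k} = \frac{r d_k c_k}{s}$, whence
\[
  \frac rs - \sum_{k=1}^d \frac{r_k}{s_k} = \frac{r\bigl(1 - \sum_{k=1}^d c_k d_k\bigr)}{s} = \frac{re}{s}, \qquad e := 1 - \sum_{k=1}^d c_k d_k .
\]
It therefore suffices to show $s \mid e$ in $\mathcal{O}$, for then $p := re/s$ lies in $\mathcal{O}$ and \eqref{9tzyHkce} holds.

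To establish $s \mid e$, I would first reduce $e$ modulo each $s_k$. For $j \neq k$ the product $c_j = \prod_{i \neq j} s_i$ contains the factor $s_k$, so $c_j \equiv 0 \pmod{s_k}$, while $c_k d_k \equiv 1 \pmod{s_k}$ by construction; hence $e \equiv 1 - 1 = 0 \pmod{s_k}$, i.e.\ $s_k \mid e$ for every $k$. The crux is then the passage from ``each $s_k$ divides $e$'' to ``their product $s$ divides $e$'' --- false for arbitrary divisors, but valid here because pairwise comaximality of the $(s_k)$ gives $\bigcap_k (s_k) = \prod_k (s_k) = (s)$, so $e \in \bigcap_k (s_k) = (s)$. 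I expect this combination step, together with making precise the standard fact that pairwise comaximal ideals intersect in their product, to be the only genuine subtlety; the remainder is direct computation.
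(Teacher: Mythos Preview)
Your proof is correct and follows essentially the same route as the paper's: both show $c_k$ is a unit modulo $(s_k)$ by stability of comaximality under products, then verify the key divisibility modulo each $(s_k)$ and pass to $(s)$ via pairwise comaximality. The only cosmetic difference is that you factor out $r$ and work with $e = 1 - \sum_k c_k d_k$, whereas the paper works directly with $r - \sum_k r_k c_k$ and names the combination step the Chinese remainder theorem rather than the equivalent formulation $\bigcap_k (s_k) = \prod_k (s_k)$.
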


\begin{proof}
Let $k \in \{1, \ldots, n\}$.
As the elements $s_j$ and $s_k$ were supposed to be coprime for $j \neq k$, these $s_j$ are invertible modulo $(s_k)$.
Hence their product $c_k$ is also invertible modulo $(s_k)$, and we can define $d_k$ and $r_k$ as in the announcement.
By definition, $d_kc_k \equiv 1 \mod (s_k)$.
Hence
\begin{equation}
\label{tQ2cJ9UL}
r_kc_k = rd_kc_k \equiv r \mod (s_k) .
\end{equation}
For $j \neq k$ we have $s_k \mid c_j$, so
\begin{equation}
\label{72DGMp8S}
r_jc_j \equiv 0 \mod (s_k) .
\end{equation}
Combining \eqref{tQ2cJ9UL} and \eqref{72DGMp8S}, we get
\[
r_1c_1 + \cdots + r_dc_d \equiv r \mod (s_k) .
\]
This congruence holds for all $k$, so by the Chinese remainder theorem it also holds modulo $(s)$.
In other words,
\begin{equation}
\label{U2bQ7YxP}
r = r_1c_1 + \cdots + r_dc_d + ps
\end{equation}
for some $p \in \mathcal{O}$.
Dividing \eqref{U2bQ7YxP} by $s$ we obtain the announced partial fraction decomposition.
\end{proof}

Note that \autoref{GrJNo6wT} allows for the computation of individual numerators $r_i$ without necessity to compute the complete partial fraction decomposition.
It is also implicitly underlying Kung and Tong's fast algorithm \cite{kung1977} for partial fraction decomposition and Xin's algorithm \cite{xin2004} for MacMahon's partition analysis.
In my opinion, \autoref{GrJNo6wT} is largely underestimated and should, indeed, be taught to students as a standard method for partial fraction decomposition.

We continue by studying the important special case that $\mathcal{O} = K[w]$ is a polynomial ring and we have a partial fraction denominator of the form $1-aw^n$.
This is the kind of situation typically arising during the computation of a vector partition function.

\begin{definition}
Let $n \in \posint$.
Let $K$ be a field of characteristic not dividing $n$, containing the $n$-th roots of unity.
Let $a \in (K^\units)^n$ be an $n$-th power.
Let $f(w) \in K[w]$ be a polynomial such that $f(\alpha^{-1}) \neq 0$ if $\alpha^n = a$.
With these we associate the \emph{generalized Dedekind sum} $S_{f,a}(n) \in K$ given by 
\[
	S_{f,a}(n)
	= \frac 1n \sum_{\substack{\alpha \in K \\ \alpha^n = a}} \frac{1}{f(\alpha^{-1})}.
\]
\end{definition}

Note that $S_{f,a}(n)$ is the type of generalized Dedekind sums studied by Gessel \cite{gessel1997}.
In this situation, \autoref{GrJNo6wT} specializes as follows:

\begin{lemma}
\label{bH6NGuYt}
Let $n \in \posint$.
Let $K$ be a field of characteristic not dividing $n$, containing the $n$-th roots of unity.
Let $a \in (K^\units)^n$ be an $n$-th power.
Let $f(w) \in K[w]$ be coprime to $1-aw^n$.
Consider the partial fraction decomposition
\[
	\frac{1}{(1-aw^n)f(w)} = \frac{A(w)}{1-aw^n} + \frac{B(w)}{f(w)}
\]
with $\deg(A(w)) < n$ and $\deg(B(w)) < \deg(f(w))$.
Then the constant term of $A(w)$ is $A(0) = S_{f,a}(n)$.
\end{lemma}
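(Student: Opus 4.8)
The plan is to determine $A$ through its values at the roots of $1-aw^n$ and then recover the constant term by interpolation. Since $K$ contains the $n$-th roots of unity and its characteristic does not divide $n$, the element $a$ has exactly $n$ distinct $n$-th roots $\alpha$, and correspondingly $1-aw^n = \prod_{\alpha^n=a}(1-\alpha w)$ splits into $n$ pairwise distinct linear factors with roots $w=\alpha^{-1}$.

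First I would pin down $A$ at these roots. Multiplying the given partial fraction decomposition by $1-aw^n$ yields $\frac{1}{f(w)} = A(w) + \frac{B(w)(1-aw^n)}{f(w)}$; equivalently, since \autoref{GrJNo6wT} gives $A(w) \equiv f(w)^{-1} \bmod (1-aw^n)$, we have $A(w)f(w)\equiv 1 \bmod (1-aw^n)$. Evaluating at $w=\alpha^{-1}$ and using that $f$ is coprime to $1-aw^n$, so that $f(\alpha^{-1})\neq 0$, gives $A(\alpha^{-1}) = 1/f(\alpha^{-1})$ for every $\alpha$ with $\alpha^n=a$.

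Since $\deg A < n$ and $A$ is known at the $n$ distinct points $\alpha^{-1}$, it is determined by Lagrange interpolation, and in particular $A(0) = \sum_{\alpha^n=a} A(\alpha^{-1})\, L_\alpha(0)$, where $L_\alpha(0) = \bigl(\prod_{\beta^n=a,\,\beta\neq\alpha}(1-\beta\alpha^{-1})\bigr)^{-1}$ is the value at $0$ of the $\alpha$-th Lagrange basis polynomial. The crux is the evaluation of this weight. Here I would use $\prod_{\beta\neq\alpha}(1-\beta w) = (1-aw^n)/(1-\alpha w) = \sum_{j=0}^{n-1}(\alpha w)^j$, the last equality being the factorization $1-(\alpha w)^n = (1-\alpha w)\sum_{j=0}^{n-1}(\alpha w)^j$ together with $a=\alpha^n$. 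Evaluating at $w=\alpha^{-1}$ makes each summand equal to $1$, so $\prod_{\beta\neq\alpha}(1-\beta\alpha^{-1}) = n$ and hence $L_\alpha(0) = 1/n$.

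Combining the two steps gives $A(0) = \frac1n\sum_{\alpha^n=a}\frac{1}{f(\alpha^{-1})} = S_{f,a}(n)$, as claimed. The only genuinely delicate point is the weight computation $\prod_{\beta\neq\alpha}(1-\beta\alpha^{-1})=n$; once the geometric-series factorization is in hand it is immediate, but it is exactly where the hypothesis $a=\alpha^n$ (rather than a general constant) is essential, and it is the step I would expect to be the main obstacle to a careless proof.
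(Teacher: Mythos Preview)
Your proof is correct and follows essentially the same path as the paper's: both factor $1-aw^n$ into its $n$ distinct linear factors and reduce everything to the identity $\prod_{\beta\neq\alpha}(1-\beta\alpha^{-1})=\prod_{\zeta^n=1,\,\zeta\neq1}(1-\zeta)=n$. The only cosmetic difference is that the paper passes to the finer partial fraction decomposition $\sum_{\alpha} A_\alpha/(1-\alpha w)$ (computing each constant $A_\alpha$ via \autoref{GrJNo6wT}) and then reads off $A(0)=\sum_\alpha A_\alpha$, whereas you evaluate $A$ at the $n$ roots and recover $A(0)$ by Lagrange interpolation; the two computations are equivalent.
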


\begin{proof}
We calculate the partial fraction decomposition with respect to the finer decomposition
\[
(1-aw^n)f(w) = f(w) \prod_{\alpha^n = a} (1-\alpha w)
\]
Let $A_\alpha$ denote the numerator of $1-\alpha w$ in this finer partial fraction decomposition.
As $\deg(1-\alpha w) = 1$ we have $A_\alpha \in K$.
So reduction and modular inversion in \autoref{GrJNo6wT} are just evalutation of polynomials and inversion in $K$, respectively.
Hence
\[ \begin{split}
A_\alpha
&= \Bigl( f(w) \prod_{\tilde \alpha \neq \alpha} (1-\tilde \alpha w) \Bigr\vert_{w=\alpha^{-1}} \Bigr)^{-1} \\
&= \Bigl( f(\alpha^{-1}) \prod_{\zeta^n = 1, \zeta \neq 1} (1-\zeta) \Bigr)^{-1}
= \frac{1}{nf(\alpha^{-1})} .
\end{split} \]
As $A(0) = \sum_{\alpha^n = a} A_\alpha$ the lemma follows.
\end{proof}

\autoref{bH6NGuYt} is provided here mostly as a starting point for further investigations.
In the version of the algorithm explained below we will decompose $1-aw^n$ into linear factors and treat all the poles individually.

Another important consequence of \autoref{GrJNo6wT} is that the type of partial fraction decompositions \eqref{Nb7ZAmCp} used to compute vector partition functions can be computed ``for formal $b$'' in the following sense:

\begin{proposition}
\label{RpjtYUYa}
Let $K$ be a field and $w$ a variable.
Fix $a \in K^\units$, $m \in \posint$, and $f(w) \in K[w]$ such that $f(0) \neq 0$ and $f(a^{-1}) \neq 0$.
For each $b \in \mathbf{N}$ consider the partial fraction decomposition
\[
	\frac{1}{(1-aw)^mf(w)w^b}
	= \frac{A_1(b;w)}{(1-aw)^m}
	+ \frac{A_2(b;w)}{f(w)}
	+ \frac{B(b;w)}{w^b} .
\]
Then $A_1(b;w)$ is given by an effectively computable expression in $b$.
\end{proposition}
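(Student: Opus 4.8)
The plan is to invoke \autoref{GrJNo6wT} to obtain a closed formula for $A_1(b;w)$ and then analyze how that formula depends on $b$. Applying the lemma to the decomposition $s = (1-aw)^m \cdot f(w) \cdot w^b$ with $s_1 = (1-aw)^m$, the cofactor is $c_1 = f(w)w^b$, and since the global numerator is $r = 1$ we simply get $A_1(b;w) = d_1$, where $d_1$ is an inverse of $f(w)w^b$ modulo $(1-aw)^m$. The hypotheses $f(0) \ne 0$ and $f(a^{-1}) \ne 0$ guarantee the coprimalities needed to apply the lemma, so $f(w)w^b$ is indeed invertible modulo $(1-aw)^m$. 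Thus the whole question reduces to the $b$-dependence of this one modular inverse; computing it inside $K[w]/\bigl((1-aw)^m\bigr)$ automatically produces the representative of degree $< m$ required by the normalization of the decomposition.

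First I would pass to the local ring $K[w]/\bigl((1-aw)^m\bigr)$ and introduce the nilpotent coordinate $u = 1-aw$, so that this ring is $K[u]/(u^m)$ and $w = a^{-1}(1-u)$. The only place $b$ enters is through $w^b = a^{-b}(1-u)^b$. Because $u^m = 0$, the binomial expansion of $(1-u)^b$ truncates, giving
\[
	w^b \equiv a^{-b}\sum_{j=0}^{m-1} \binom{b}{j}(-u)^j \pmod{u^m},
\]
where each coefficient $\binom{b}{j}$ is a polynomial in $b$ of degree $j$. Hence, modulo $u^m$, the product $f(w)w^b$ equals $a^{-b}$ times a polynomial in $u$ whose coefficients are polynomials in $b$ and whose constant term is the nonzero scalar $f(a^{-1})$.

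It then remains to invert such a unit in $K[u]/(u^m)$. Writing $f(w)w^b \equiv a^{-b}\bigl(q_0 + u\,\eta(b;u)\bigr)$ with $q_0 = f(a^{-1})$ and $\eta(b;u)$ having polynomial-in-$b$ coefficients, the inverse is computed by the truncated geometric series
\[
	A_1(b;w) \equiv a^{b}\,q_0^{-1}\sum_{k=0}^{m-1}\Bigl(-q_0^{-1}u\,\eta(b;u)\Bigr)^k \pmod{u^m}.
\]
Since sums and products of polynomials in $b$ are again polynomials in $b$, and $q_0^{-1} \in K$ is a fixed scalar, every coefficient of this expression in $u$ -- hence, after the linear back-substitution $u = 1-aw$, also every coefficient in $w$ -- is $a^{b}$ times a polynomial in $b$. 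Reinterpreting this finite symbolic calculation with $b$ held as a formal parameter yields the asserted effectively computable expression: $A_1(b;w)$ is $a^{b}$ times a polynomial in $b$ with coefficients in the space $\{\,g \in K[w] : \deg g < m\,\}$.

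The computational steps are all routine once this setup is fixed; the one point demanding care -- and the only genuine obstacle -- is the treatment of the exponent $b$ inside $w^b$. The geometric-series inversion manifestly preserves polynomiality in $b$, but this is only useful because the truncation $u^m = 0$ collapses $w^b$ into a \emph{finite} expression whose entire $b$-dependence is carried by the polynomials $\binom{b}{j}$ together with the single exponential factor $a^{-b}$. I would therefore isolate $a^{-b}$ at the very start, so that the inversion is applied to an honest unit with polynomial-in-$b$ coefficients and the factor $a^{-b}$ merely inverts to $a^{b}$ and rides along untouched. This is also what pins down the precise shape of the answer, an expression of the form (exponential)$\times$(polynomial) in $b$, and explains why no higher transcendental dependence on $b$ can appear.
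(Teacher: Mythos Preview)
Your proof is correct and follows essentially the same route as the paper: invoke \autoref{GrJNo6wT} to identify $A_1(b;w)$ with the inverse of $f(w)w^b$ in $K[w]/\bigl((1-aw)^m\bigr)$, pass to a local coordinate at $a^{-1}$ so that the quotient becomes a truncated power series ring, and exploit the nilpotence to reduce $w^b$ to a finite binomial sum whose $b$-dependence is an exponential $a^{-b}$ times polynomials $\binom{b}{j}$. The only cosmetic difference is the order of operations: you expand $(1-u)^b$ first and then invert the product via a truncated geometric series, whereas the paper inverts $f$ and $1+a\tilde w$ separately and then applies the negative-binomial identity $(1-x)^{-b}=\sum_j\binom{j+b-1}{j}x^j$ to compute the $b$-th power of the latter inverse directly.
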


\begin{proof}
By \autoref{GrJNo6wT}, $A_1(b;w)$ is the inverse of $f(w)w^b$ modulo $(1-aw)^m$.
Let $\tilde w$ be the local coordinate at $a^{-1}$, i.e., $w = a^{-1} + \tilde w$, and carry out all further computations in $K[\tilde w]/(\tilde w^m)$.
Then the inversion problem is reduced to invert $f(a^{-1} + \tilde w)(a^{-1} + \tilde w)^b = f(a^{-1} + \tilde w)a^{-b}(1 + a \tilde w)^b$.
Let $g(\tilde w), h(\tilde w) \in K[\tilde w]/(\tilde w^m)$ be the inverses of $f(a^{-1} + \tilde w)$ and $1 + a\tilde w$, respectively.
Then $g(\tilde w)$ can be computed explicitly.
The inverse of $f(a^{-1} + \tilde w)(a^{-1} + \tilde w)^b$ is $g(\tilde w)a^bh(\tilde w)^b$.
Note the general formula
\begin{equation}
\label{MWKpNdwa}
	\frac{1}{(1-x)^b}
	= \sum_{j=0}^\infty \binom{j+b-1}{j}x^j 
	= 1 + bx + \frac{b(b+1)}2 x^2 + \cdots .
\end{equation}
By \eqref{MWKpNdwa} we have
\[
	h(\tilde w)^b = \sum_{j=0}^{m-1} \binom{j+b-1}{j} (-a)^j\tilde w^j .
\]
Hence we obtain explicit expressions for the coefficients of $h(\tilde w)^b$, hence for $g(\tilde w)a^bh(\tilde w)^b$.
Substituting $\tilde w = w - a^{-1}$ we obtain $A_1(b;w)$.
\end{proof}

\begin{example}
Suppose we want to compute the first numerator of the partial fraction decomposition
\[
	\frac{1}{(1-w)^2w^b}
	= \frac{A(b;w)}{(1-w)^2} + \frac{B(b;w)}{w^b} .
\]
By \autoref{GrJNo6wT} we have to invert $w^b$ modulo $(1-w)^2$.
We do this as indicated in the proof of \autoref{RpjtYUYa}.
Restated in the local coordinate at $1$, we have to invert $(1 + \tilde w)^b$ modulo $\tilde w^2$.
This inverse is $(1-\tilde w)^b = 1 - b\tilde w$, a special case of \eqref{MWKpNdwa}.
Substituting $\tilde w = w - 1$ we obtain the result
\[
	A(b;w) = 1 - b(w-1) = b+1 - bw .
\]
Note that the constant term is $A(b;0) = b + 1$.
We have thus computed our first partition function, namely $\phi_{1,1}(b) = b + 1$.
\end{example}

\begin{example}
For a more complex example, consider the partial fraction decomposition
\[ \begin{split}
	\frac{1}{(1-w^2)(1-w^4)w^b}
	&= \frac{1}{(1-iw)(1+iw)(1+w)^2(1-w)^2w^b} \\
	&= \frac{A_1(b)}{1-iw}
	+ \frac{A_2(b)}{1+iw}
	+ \frac{A_3(b;w)}{(1+w)^2}
	+ \frac{A_4(b;w)}{(1-w)^2}
  + \frac{B(b;w)}{w^b} .
\end{split} \]
By direct application of \autoref{GrJNo6wT} we obtain $A_1(b) = \frac{i^b}{8}$ and $A_2(b) = \frac{i^{-b}}{8}$.
To determine $A_3(b;w)$ we proceed as in the proof of \autoref{RpjtYUYa}.
We have to invert $(1-iw)(1+iw)(1-w)^2w^b$ modulo $(1+w)^2$.
In the local coordinate $\tilde w$ at $-1$, the first factor is $1 - i (-1 + \tilde w) = (1 + i)(1 - \frac{i}{1 + i} \tilde w)$, so its inverse is $\frac{1}{1+i}(1+\frac{i}{1+i}\tilde w)$.
The second and third factor can be inverted similarly.
The crucial step is to invert $(-1 + \tilde w)^b = (-1)^b(1 - \tilde w)^b$.
This results in $(-1)^{-b}(1 + \tilde w)^b = (-1)^{-b}(1 + b \tilde w)$.
One would proceed by multiplying the four inverses modulo $\tilde w^2$ and changing the coordinate back to $w$, thus obtaining $A_3(b;w)$.
The remaining numerator $A_4(b;w)$ can be computed similarly.

To carry out the complete calculation by hand would be a considerable amount of work.
Yet I hope to have convinced you by these indications that the proof of \autoref{RpjtYUYa} does indeed furnish an algorithm to compute formal expressions for the numerators.
\end{example}

\section{Constant terms and partial fraction decomposition}
\label{vYhF777c}

We continue by making precise in which sense partial fraction decompositions can be used to compute the constant term of a rational function.
The first step is to define the rings and fields involved in the computations.
We consider the generating function $f_A(z)$ given in \eqref{Jc8LhJrg} as a function in the last variable $z_m$.
If the last row of $A$ only contains the numbers $0$ and $1$, all poles of $f_A(z)$ lie in $\mathbf{C}(z_1, \ldots, z_{m-1})$.
In the general case they are only contained in an algebraic extension.
This gives rise to the following definition.

\begin{definition}
For a field $K$, let $K((\infrt w))$ denote the field of formal series
\[
	f(w) = \sum_{k = M}^\infty c_{k/N} w^{k/N}
\]
for some $M \in \mathbf{Z}$, $N \in \posint$ with coefficients $c_{k/N} \in K$.
Similarly, let $K[\infrt w]$ denote the ring of formal finite sums $\sum_{k = M}^{M'} c_{k/N} w^{k/N}$ for some $M, M' \in \mathbf{Z}$, $N > 0$.
Let $K(\infrt{w})$ denote the quotient field of $K[\infrt{w}]$.
\end{definition}

Beware that $K((\infrt{w}))$ is not complete in the natural topology.
For example, the series $\sum_{k=2}^\infty w^{k+\frac 1k}$ does not converge.

Given a rational function $f(w) \in \mathbf{C}(w)$ in a complex variable $w$, there is a Laurent series expansion for each open annulus not containing any pole of $f(w)$.
We define the ``standard'' expansion to be the one on the annulus $\{ z \in \mathbf{C} : 0 < \lvert z \rvert < \epsilon \}$ for some $\epsilon > 0$.
If we deal with rational functions over arbitrary fields, or even functions involving rational exponents as defined above, there is no notion of convergence.
Yet we can define the standard expansion as follows:

\begin{definition}
\label{kmaa6twB}
Let $K$ be a field and $w$ a formal variable.
The \emph{standard expansion} is the embedding $K(\infrt{w}) \to K((\infrt{w}))$ over $K$ which maps $w^q$ to $w^q$ for all $q \in \mathbf{Q}$.
The \emph{constant coefficient} of $f(w) \in K(\infrt{w})$ is the coefficient of $w^0$ in the standard expansion.
It denoted by $\operatorname{const}_w f(w) \in K$.
\end{definition}

Using this definition we can state the lemma underlying the partial fraction method precisely.
With regard to the algorithmic application, we use the maximal abelian extension $\mathbf{Q}^\mathrm{ab}$ of $\mathbf{Q}$ as a ground field instead of $\mathbf{C}$.

\begin{lemma}
\label{JpeiE9iZ}
Consider a vector partition function $\phi_A$.
Suppose that $A \in \nonnegint^{(m,d)}$ has nonnegative coefficients.
Let $f_A(z) \in \mathbf{Q}^{\mathrm{ab}}(\infrt{z_1}, \ldots, \infrt{z_m})$ be the generating function of $\phi_A$ as in \eqref{Jc8LhJrg}.
Then
\[
	\phi_A(b)
	= \mathrm{const}_{z_1} \cdots \operatorname{const}_{z_m} f_A(z)z^{-b}.
\]
for all $b \in \mathbf{Q}^m$.
\end{lemma}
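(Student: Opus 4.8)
The plan is to reduce the iterated constant-term computation to the elementary fact that $f_A$, expanded geometrically, is exactly the generating series $\sum_\beta \phi_A(\beta) z^\beta$, and that extracting the constant term one variable at a time reads off the coefficients of this series. The role of the hypotheses (nonnegativity of $A$ and $\ker(A) \cap \nonnegreal^d = \{0\}$) will be to guarantee, respectively, that all expansions involve only nonnegative powers and that the series is genuinely formal.

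First I would record that $f_A$ admits a genuine formal power series expansion. The recession cone of the polytope $\{ x \in \nonnegreal^d : Ax = \beta \}$ is $\ker(A) \cap \nonnegreal^d = \{0\}$, so this polytope is bounded and contains only finitely many lattice points; hence $\phi_A(\beta) < \infty$ for every $\beta$. Expanding each factor of \eqref{Jc8LhJrg} as a geometric series therefore yields a well-defined element of $\mathbf{Q}^{\mathrm{ab}}[[z_1, \ldots, z_m]]$,
\[
  F(z) = \prod_{k=1}^d \sum_{x_k = 0}^\infty z^{x_k c_k} = \sum_{x \in \nonnegint^d} z^{Ax} = \sum_{\beta \in \nonnegint^m} \phi_A(\beta) z^\beta,
\]
having only nonnegative integer exponents.

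The heart of the argument is to show that the iterated one-variable standard expansions reproduce $F$. Here nonnegativity of $A$ is decisive. Writing $c_k = ((c_k)_1, \ldots, (c_k)_m)$, each factor, viewed over $K_{m-1} = \mathbf{Q}^{\mathrm{ab}}(\infrt{z_1}, \ldots, \infrt{z_{m-1}})$, has the form $\frac{1}{1 - a\, z_m^{(c_k)_m}}$ with $a \in K_{m-1}$ a monomial and $(c_k)_m \geq 0$; since the exponent $(c_k)_m$ is nonnegative, the standard expansion in $z_m$ of \autoref{kmaa6twB} is \emph{forced} to be the geometric series $\sum_{j \geq 0} a^j z_m^{j(c_k)_m}$, with no negative powers of $z_m$. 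This is precisely where the orientation built into the standard expansion matters and where the paradox of \autoref{CpwJ444b} is avoided. Because the standard expansion is a field embedding, it respects the product, so the $z_m$-expansion of $f_A$ is the product of these geometric series; iterating the same reasoning in $z_{m-1}, \ldots, z_1$ and using that each fixed multidegree receives only finitely many contributions shows that the iterated expansion coincides with the single series $F(z)$.

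It then remains to observe that the iterated constant term merely reads off a coefficient: since $F$ has only nonnegative exponents, each operator $\operatorname{const}_{z_i}(\,\cdot\, z_i^{-b_i})$ extracts the coefficient of $z_i^{b_i}$, and performing this for $z_m, \ldots, z_1$ in turn leaves the coefficient of $z^b$ in $F$, which is $\phi_A(b)$ (and is $0$, in agreement with $\phi_A$, whenever $b \notin \nonnegint^m$, in particular for non-integral $b \in \mathbf{Q}^m$). The main obstacle I anticipate is the middle step: making rigorous that the iterated one-variable standard expansions commute and assemble into the single multivariate series $F$, without spurious negative powers or reordering ambiguities. Nonnegativity of $A$ is exactly what removes this difficulty, since it forces every expansion to be a nonnegative-power series and every fixed multidegree to receive only finitely many contributions.
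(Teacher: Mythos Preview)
The paper states this lemma without proof, so there is no paper argument to compare against; your proposal supplies one, and it is essentially correct.

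Two small points worth tightening. First, in your second paragraph you write that each factor expands as the geometric series $\sum_{j\ge 0} a^j z_m^{j(c_k)_m}$. When $(c_k)_m=0$ this is not literally what happens: the factor $\frac{1}{1-a}$ is then a scalar in $K_{m-1}$ whose $z_m$-expansion is itself, not an infinite sum of constants. Your conclusion (only nonnegative powers of $z_m$ occur) is unaffected, but the case split should be made explicit. Second, the anxiety you flag about the ``middle step'' is already dissolved by what you proved earlier: since the standard expansion $K_{m-1}(\infrt{z_m})\hookrightarrow K_{m-1}((\infrt{z_m}))$ is a ring homomorphism and each factor expands with nonnegative $z_m$-exponents, the product expansion lies in $K_{m-1}[[z_m]]$; after taking $\const_{z_m}(\,\cdot\,z_m^{-b_m})$ you are left with an element of $K_{m-1}$ that is again a finite product of factors $\frac{1}{1-\text{monomial}}$ and a monomial prefactor, so the same reasoning applies verbatim at the next stage. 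The finiteness of contributions to each multidegree is exactly your first observation that $\phi_A(\beta)<\infty$, so no separate argument is needed there.
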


We will write $\const_z = \const_{z_1} \cdots \const_{z_m}$ for this type of iterated constant coefficients.
The basic principle to successfully apply the partial fraction method is to alternate working in fields of type $K(\infrt w)$ and fields of rational functions $K(w)$.
Namely, partial fraction decomposition is applied to rational functions, but produces expressions with rational exponents.
The following trivial lemma allows to clear out the rational exponents for the next iteration.

\begin{lemma}
\label{XCu4pJvt}
Let $f(w) = \sum_{q \in \mathbf{Q}} c_q w^q \in K((\infrt w))$ and $r \in \mathbf{Q}$.
Let $f(w^r) = \sum_{q \in \mathbf{Q}} c_q w^{rq}$.
Then $\operatorname{const}_w f(w) = \operatorname{const}_w f(w^r)$.
\end{lemma}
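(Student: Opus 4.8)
The plan is to reduce the statement entirely to the defining formula for the constant coefficient in \autoref{kmaa6twB}. Both $f(w)$ and $f(w^r)$ are presented explicitly as standard expansions in $K((\infrt w))$, so in each case $\operatorname{const}_w$ is obtained simply by reading off the coefficient of $w^0$. The whole proof therefore amounts to comparing those two coefficients, and I expect it to be a one-line bookkeeping verification once the definitions are unwound.

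First I would unwind the hypotheses. Since $f(w) = \sum_{q \in \mathbf{Q}} c_q w^q$ is already written as its standard expansion, \autoref{kmaa6twB} gives $\operatorname{const}_w f(w) = c_0$ directly. Next, using the definition $f(w^r) = \sum_{q \in \mathbf{Q}} c_q w^{rq}$, I would observe that the term indexed by $q$ contributes to the coefficient of $w^0$ exactly when $rq = 0$. Collecting these contributions yields
\[
  \operatorname{const}_w f(w^r) = \sum_{q \,:\, rq = 0} c_q .
\]
Finally, for $r \neq 0$ the condition $rq = 0$ is equivalent to $q = 0$, so the sum collapses to the single term $c_0$, giving $\operatorname{const}_w f(w^r) = c_0 = \operatorname{const}_w f(w)$, as claimed.

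Because this is genuinely a reindexing identity, I do not expect a substantive obstacle; the only point deserving a word of care is well-definedness. I would note that the substitution is to be read as the formal relabeling $q \mapsto rq$ of exponents, and that the intended regime is $r \neq 0$ — indeed $r$ is the positive rational used to clear denominators between successive iterations, for which the reindexed series is again bounded below with bounded denominators, so that $f(w^r) \in K((\infrt w))$. The degenerate case $r = 0$ would pile every term onto the exponent $0$ and need not even lie in $K((\infrt w))$, so it is harmless to exclude, and excluding it is exactly what makes the collapse of the sum to $c_0$ valid.
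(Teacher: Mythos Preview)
Your argument is correct and is exactly the intended one: the paper calls this a ``trivial lemma'' and gives no proof at all, so your direct reindexing verification via \autoref{kmaa6twB} is precisely what is meant. Your remark that one needs $r>0$ (or at least $r\neq 0$) for $f(w^r)$ to again lie in $K((\infrt w))$ and for the collapse to $c_0$ to hold is a fair observation about the stated hypothesis $r\in\mathbf{Q}$; in every application the paper takes $r$ to be a positive integer clearing denominators, so this edge case is indeed harmless.
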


We are now in position to make the statement underlying the partial fraction method precise.
For $q \in \mathbf{Q}$, let $e(q) = e^{2\pi i q}$.
To refer to sets of variables, we use the notations $z = (z_1, \ldots, z_m)$, $z' = (z_1, \ldots, z_{m-1})$ and $w = z_m$.
Similarly, $c_k'$ denotes the vector obtained from $c_k$ by omitting the last component, so $c_k = (c_k', a_{mk})$.

\begin{proposition}
\label{gXVNAiBb}
Let $A \in \nonnegint^{(m,d)}$, $b \in \mathbf{Z}^m$, and $q \in \mathbf{Q}^d$.
Suppose that the columns $c_1, \ldots, c_d$ of $A$ are pairwise linearly independent, that $a_{mk} > 0$ for $k = 1, \ldots, d$, and that $b_m \geq 0$.
Consider $f(z) = f_{A,b,q}(z) \in \mathbf{Q}^\mathrm{ab}(z)$ of the form
\[
	f_{A,b,q}(z) = z^{-b} \prod_{k=1}^d \frac{1}{1-e(q_k)z^{c_k}} .
\]
Then one can effectively compute $c_n \in \mathbf{Q}^\mathrm{ab}$, $A^{(n)} \in \nonnegint^{(m-1,d)}$, $b^{(n)} \in \mathbf{Z}^{m-1}$, and $q^{(n)} \in \mathbf{Q}^d$ for $n = 1, \ldots, N$ such that
\[
	\const_z f(z)
	= c_1 \const_{z'} f_1(z') + \cdots + c_N \const_{z'} f_N(z')
\]
for $f_n(z') = f_{A^{(n)},b^{(n)},q^{(n)}}(z')$.
\end{proposition}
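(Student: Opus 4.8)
The plan is to treat $f(z)$ as a rational function of the single variable $w = z_m$ with coefficients in $K = \mathbf{Q}^\mathrm{ab}(z')$, to compute $\const_{z_m} f$ by partial fraction decomposition, and to recognize the outcome as a linear combination of functions of the same shape in the remaining variables $z'$. Writing $b = (b', b_m)$ and $c_k = (c_k', a_{mk})$, I would first separate the variables as
\[
  f(z) = (z')^{-b'}\,\frac{1}{w^{b_m}\prod_{k=1}^d\bigl(1 - e(q_k)(z')^{c_k'}w^{a_{mk}}\bigr)} ,
\]
so that, since $a_{mk} > 0$, the factor $w^{-b_m}$ carries the only pole at $w = 0$ while the product carries all poles away from $0$. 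I would then factor each $1 - e(q_k)(z')^{c_k'}w^{a_{mk}}$ into the linear factors $1 - \beta_{k,j}w$ with $\beta_{k,j} = e\bigl((q_k - j)/a_{mk}\bigr)(z')^{c_k'/a_{mk}}$ over the extension $\mathbf{Q}^\mathrm{ab}(\infrt{z'})$; note that fractional exponents in $z'$ enter at exactly this point.

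Next I would check that all these linear factors are pairwise coprime, which is where the hypothesis that the columns are pairwise linearly independent enters: a shared root would force a linear relation between $c_j$ and $c_k$. Hence every pole away from $0$ is simple, and \autoref{GrJNo6wT} (in the simple-pole form of \autoref{bH6NGuYt}) computes the numerator $A_{k,j}$ over $1 - \beta_{k,j}w$ by evaluating the complementary product at $w = \beta_{k,j}^{-1}$, using $\prod_{\zeta^{a_{mk}} = 1,\, \zeta \neq 1}(1 - \zeta) = a_{mk}$. By the constant-term formula recalled in \autoref{CpwJ444b} the term over $w^{b_m}$ contributes nothing (its numerator has degree $< b_m$, so only negative powers of $w$ survive), while each simple-pole term contributes its numerator, giving
\[
  \const_{z_m} f(z) = (z')^{-b'}\sum_{k,j} A_{k,j}.
\]
A direct computation shows each $A_{k,j}$ is a monomial in $z'$ (with a root-of-unity coefficient) times $\prod_{k' \neq k}\bigl(1 - e(\tilde q)(z')^{\tilde c}\bigr)^{-1}$, where $\tilde c = c_{k'}' - (a_{mk'}/a_{mk})c_k'$ and $\tilde q = q_{k'} - (a_{mk'}/a_{mk})q_k$. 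Thus every summand is already of the form $c_n\, f_{A^{(n)}, b^{(n)}, q^{(n)}}(z')$, up to the appearance of rational exponents.

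It remains to bring these summands into the stated integral and nonnegative shape. The rational exponents are removed by \autoref{XCu4pJvt}: substituting $z_i \mapsto z_i^{N}$ for a common denominator $N$ in each of the variables $z_1, \ldots, z_{m-1}$ clears all denominators while leaving every $\const_{z'}$ unchanged, and the phases and monomials are absorbed into $q^{(n)} \in \mathbf{Q}^d$, $b^{(n)} \in \mathbf{Z}^{m-1}$, and $c_n \in \mathbf{Q}^\mathrm{ab}$. The genuine difficulty is that $\tilde c = c_{k'}' - (a_{mk'}/a_{mk})c_k'$ need not be nonnegative, so a reduced factor $\bigl(1 - e(\tilde q)(z')^{\tilde c}\bigr)^{-1}$ may carry negative exponents; one cannot simply invert the monomial, since the identity $(1 - X)^{-1} = -X^{-1}(1 - X^{-1})^{-1}$ alters the standard expansion, exactly as the paradox of \autoref{CpwJ444b} shows. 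I expect this to be the main obstacle: one must choose, for each reduced factor and consistently with the standard expansion of \autoref{kmaa6twB}, the expansion direction that keeps $\const_{z'}$ well defined, and then argue that after this normalization the factors can indeed be arranged with $A^{(n)} \in \nonnegint^{(m-1,d)}$. Every other step is manifestly effective, so this final normalization is the crux of the argument.
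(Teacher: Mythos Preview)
Your outline is exactly the paper's: factor each $1 - e(q_k)z^{c_k}$ into linear factors in $w$ over $\mathbf{Q}^\mathrm{ab}(\infrt{z'})$, observe that the poles in $w$ are simple by pairwise linear independence of the columns, compute each scalar numerator $A_{k,l}$ via \autoref{GrJNo6wT} (using $\prod_{\zeta\neq 1}(1-\zeta)=a_{mk}$), discard the $w^{-b_m}$ term, and clear the rational exponents by \autoref{XCu4pJvt}. Your formulas for the reduced exponent vectors $\tilde c$ and phases $\tilde q$ agree with those in the paper.

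Where you diverge is in the final paragraph, and here there is a misconception. The identity $(1-X)^{-1} = -X^{-1}(1-X^{-1})^{-1}$ does \emph{not} ``alter the standard expansion'': it is an identity in the field $\mathbf{Q}^\mathrm{ab}(\infrt{z'})$, so both sides denote the same element, and the standard expansion of \autoref{kmaa6twB} is a field embedding, hence depends only on the element and not on the way it is written. The paradox recalled in \autoref{CpwJ444b} arises from expanding one rational function at two \emph{different} centers ($0$ versus $\infty$), not from algebraic rewriting within the field. There is consequently no ``choice of expansion direction'' to be made, and the paper treats this step as routine: apply \eqref{h9RGNqNp} to each factor whose exponent vector is negative, and absorb the resulting sign into $c_n$ and the freed monomial into $b^{(n)}$. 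What you flag as the crux of the argument is, in the paper's proof, a one-line cleanup.
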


Note that among the restrictions only the one about the linear independence of the columns is severe:
If $b_m < 0$ we know immediately that $\const_z f_{A,b,q}(z) = 0$.
The factor corresponding to a possible $k$ with $a_{mk} = 0$ is contained in $\mathbf{Q}^\mathrm{ab}(z')$, hence just a scalar.

\begin{proof}
Consider $f(z) = f(z'; w)$ as a function of $w$.
All poles of $f(z'; w)$ are contained in $\mathbf{Q}^\mathrm{ab}(\infrt{z'})$.
More precisely, the roots of $1-e(q_k)z^{c_k}$ are $w = e \left( \frac{l - q_k}{a_{mk}} \right) {z'}^{{-c_k'}/{a_{mk}}}$ for $l = 0, \ldots, a_{mk}-1$.
Hence this factor decomposes as
\[
	1-e(q_k)z^{c_k} = \prod_{l=0}^{a_{mk}-1} \left( 1 - e \left( \frac{q_k - l}{a_{mk}} \right) {z'}^{{c_k'}/{a_{mk}}} w \right) .
\]
This gives rise to the partial fraction decomposition
\[
	f_{A,b,q}(z'; w)
	= \frac{B(b, z'; w)}{z^b}
	+ \sum_{k=0}^d \sum_{l=0}^{a_{mk}-1} \frac{A_{k,l}(b;z')}{1 - e \left( \frac{q_k - l}{a_{mk}} \right) {z'}^{{c_k'}/{a_{mk}}} w} .
\]
Here $\deg_w B(b, z'; w) < b_m$, so the corresponding summand does not contribute to $\const_w f_{A,b,q}(z'; w)$.
As the remaining poles are simple, the numerators $A_{k,l}(b;z')$ do not depend on $w$.
Hence
\[
	\const_w f_{A,b,q}(z';w) = \sum_{k=0}^d \sum_{l=0}^{a_{mk}-1} A_{k,l}(b;z') .
\]
It follows that
\[
	\const_z f_{A,b,q}(z';w)
	= \const_{z'} \const_w f_{A,b,q}(z';w)
	= \sum_{k=0}^d \sum_{l=0}^{a_{mk}-1} \const_{z'} A_{k,l}(b;z') .
\]
Our strategy to construct $f_1(z'), \ldots, f_N(z')$ is hence to transform the summands $A_{k,l}(b;z')$ suitably.
By \autoref{GrJNo6wT} they are given by
\begin{multline*}
	A_{k,l}(b; z')
	={1} \Bigg/ 
	\prod_{\tilde k \neq k} (1-e(q_{\tilde k})z^{a_{\tilde k}})
	\cdot z^b \\
	\cdot \prod_{\tilde l \neq l} \left( 1 - e \left( \frac{q_k - \tilde l}{a_{mk}} \right) z^{{a_k}/{a_{mk}}} \right)
	\Bigg\vert_{w = e \left( \frac{l - q_k}{a_{mk}} \right) {z'}^{-{c_k'}/{a_{mk}}}} .
\end{multline*}
After the substitution, the first factor in the denominator becomes
\[
\prod_{\tilde k \neq k} \left( 1-e \left( q_{\tilde k}+\frac{a_{m\tilde k(l-q_k)}}{a_{mk}} \right) {z'}^{\frac{a_{mk}c_{\tilde k}' - a_{m\tilde k}c_k'}{a_{mk}}} \right) ,
\]
the second factor becomes
\[
	e \left( \frac{b_m(l-q_k)}{a_{mk}} \right)
{z'}^{\frac{a_{mk}b' - b_mc_{k}'}{a_{mk}}} ,
\]
and the last factor becomes
\[
	\prod_{{\tilde l} \neq l} \left( 1 - e \left( \frac{l - {\tilde l}}{a_{mk}} \right) \right) \\
	= a_{mk} .
\]
We want to construct $c_1, \ldots, c_N$ and $f_1(z'), \ldots, f_N(z')$ from $A_{k,l}(b,z')$.
To simplify notation, change the indexing to $n = (k,l)$.
The idea is to choose
\[
	c_{k,l} = \frac 1 {e \left( \frac{b_m(l-q_k)}{a_{mk}} \right) a_{mk}}
\]
and
\[ \begin{split}
	f_{k,l}(z')
	&= \frac{1}{c_{k,l}} A_{k,l}(b; z') \\
	&= 	{z'}^{\frac{b_mc_{k}' - a_{mk}b'}{a_{mk}}}
\prod_{\tilde k \neq k} \left( 1-e \left( q_{\tilde k}+\frac{a_{m\tilde k(l-q_k)}}{a_{mk}} \right) {z'}^{\frac{a_{mk}c_{\tilde k}' - a_{m\tilde k}c_k'}{a_{mk}}} \right)^{-1} .
\end{split} \]
This is not quite possible, because $b^{(k,l)} = (b_mc_{k}' - a_{mk}b')/a_{mk}$ contains rational numbers as opposed to integers, and the columns $(a_{mk}c_{\tilde k}' - a_{m\tilde k}c_k')/a_{mk}$ of $A^{(k,l)}$ contain rational numbers as opposed to nonnegative integers.
By a multivariate version of \autoref{XCu4pJvt} we can substitute $z_j$ by $z_j^{a_{mk}}$ without changing the constant coefficient, thereby obtaining integers in both places.
To to make $A^{(k,l)}$ nonnegative, use the formula
\begin{equation}
\label{h9RGNqNp}
	\frac{1}{1-z_j^{-b}} = - \frac{z_j^b}{1-z_j^b}
\end{equation}
as necessary.
The sign will amount to a change of $c_{k,l}$, the numerator to a change of $b^{(k,l)}$.
\end{proof}

The most important feature of \autoref{gXVNAiBb} is that the algorithm does not branch on a condition depending on $b$.
Hence, although stated for specific $b \in \mathbf{Z}^n$, it can actually compute formal expressions in $b$.

\begin{example}
Let $A = \bigl( \begin{smallmatrix} 1 & 1 \\ 3 & 1 \end{smallmatrix} \bigr)$.
Then $\phi_A(a, b) = \operatorname{const} f_A(z,w)z^{-a}w^{-b}$, and
\[
	f_A(z,w)z^{-a}w^{-b} = \frac{1}{(1 - zw^3) (1 - zw) z^a w^b} .
\]
This is a trivial example, included here only for the sake of illustration.
In the first step of the iteration we compute the constant term with respect to $w$.
It $b < 0$ then $\phi_A(a,b) = 0$, so suppose $b \geq 0$.
As $z^{-a}$ is a scalar now, we start with the following partial fraction decomposition specializing \eqref{Nb7ZAmCp}:
\[
	\frac{1}{(1 - zw^3) (1 - zw) w^b}
	= \frac{A_1(b;w)}{1-zw^3}
	+ \frac{A_2(b;w)}{1-zw}
	+ \frac{B(b;w)}{w^b}
\]
In order to apply \autoref{bH6NGuYt} we have to ensure that $z$ is a third power.
So let $z^{1/3}$ be a third root of $z$, and use $K = \mathbf{C}(z^{1/3})$ as the field of scalars.
Then
\[ \begin{split}
	\operatorname{const}_w \frac 1{(1 - zw^3) (1 - zw) w^b}
	&= A_1(b;0) + A_2(b;0) \\
	&= S_{(1-zw)w^b,z}(3) + S_{(1-zw^3)w^b,z}(1) .
\end{split} \]
Let $\zeta$ be a primitive third root of unity.
Then
\begin{align*}
	S_{(1-zw)w^b,z}(3)
	&= \frac 13 \Bigg( \frac 1{(1-z^{2/3})z^{-b/3}}
	+ \frac 1{(1-\zeta z^{2/3})\zeta^bz^{-b/3}} \\
	& \quad + \frac 1{(1-\zeta^2z^{2/3})\zeta^{2b}z^{-b/3}} \Bigg) , \\
	S_{(1-zw^3)w^b,z}(1)
	&= \frac 1{(1-z^{-2})z^{-b}}
	= -\frac 1{(1-z^2)z^{-b-2}} .
\end{align*}
Hence $\phi_A(a,b)$ is the constant coefficient of the univariate function
\begin{multline*}
	\frac 13 \Bigg(
	\frac 1{(1-z^{2/3})z^{a-b/3}}
+ \frac 1{(1-\zeta z^{2/3})\zeta ^bz^{a-b/3}} \\
+ \frac 1{(1-\zeta ^2z^{2/3})\zeta ^{2b}z^{a-b/3}} \Bigg)
- \frac 1{(1-z^2)z^{a-b-2}} .
\end{multline*}
In order to eliminate the fractional powers, we substitute $z$ by $z^3$ according to \autoref{XCu4pJvt} in the first three summands and obtain
\[ \begin{split}
	\phi_A(a,b)
	&= \const_z
	\frac 13 \Bigg(
	\frac 1{(1-z^{2})z^{3a-b}}
	+ \frac 1{(1-\zeta z^{2})\zeta^bz^{3a-b}} \\
	& \quad + \frac 1{(1-\zeta ^2z^{2})\zeta ^{2b}z^{3a-b}} \Bigg)
  - \frac 1{(1-z^2)z^{a-b-2}} .
\end{split} \]
From this we would compute a quasi-polynomial expression for $\phi_A(a,b)$ as a second step.
\end{example}

\begin{example}
Let $A = \left( \begin{smallmatrix} 1&0&1 \\ 0&1&1 \end{smallmatrix} \right)$.
Then $\phi_A$ is Kostant's partition function associated with a root system of type $A_2$.
A prototype implementation of the partial fraction method produces, given $A$, the following output:
\[
	\phi_A(a,b)
	= \left\{ \begin{array}{cl}
	\left\{ \begin{array}{cl}
	a+1 & \text{if } a \geq 0 \\
	0 & \text{otherwise}
	\end{array} \right\}
	-
	\left\{ \begin{array}{cl}
	a-b & \text{if } a-b \geq 1 \\
	0 & \text{otherwise}
	\end{array} \right\}
	& \text{if } b \geq 0 \\
	0 & \text{otherwise.}
	\end{array} \right.
\]
Note that the traces of the algorithm are still visible in the output:
The output consists of nested case statements, where the second alternative is always $0$.
This corresponds to to the fact that if $b_m < 0$ then $\const_z f(z)z^{-b} = 0$.
The outer case statement corresponds to the first step, the partial fraction decomposition with respect to $z_2$.
Hence the condition is simply $b \geq 0$.
As there are two nonzero components in the second row of $A$, the first iteration yiels a sum of two rational functions in $z_1$.
These correspond to the two terms of the difference.
Note that for the second term there is a change of sign, and the condition is that a certain function of $a$ and $b$ must be positive.
This is because of an application of \eqref{h9RGNqNp} in the first step, as explained in the end of the proof of \autoref{gXVNAiBb}.
\end{example}

\section*{Acknowledgements}

I wish to thank Matthias Beck and the Department of Mathematics at the San Francisco State University for their hospitality during the preparation of this extended abstract.

\bibliographystyle{amsplain}
\bibliography{/Users/thomas/Documents/Bibliothek/bibliothek}

\end{document}